\newcommand\BIT{\begin{enumerate}}
\newcommand\EIT{\end{enumerate}}
\newtheorem{thm}{Theorem}[section] %the resolution could also be [subsection]
\newtheorem{defn}[thm]{Definition}
\newtheorem{lem}[thm]{Lemma}
\newtheorem*{lem*}{Lemma}
\newtheorem{ques}[thm]{Question}
\newcommand{\Cref}[1]{{Corollary~\ref{#1}}}
\DeclareMathOperator{\rank}{\mathrm{rk}}
\DeclareMathOperator{\Span}{\mathrm{Span}}
\DeclareMathOperator{\Image}{\mathrm{Im}}
\DeclareMathOperator{\Ker}{\mathrm{Ker}}
\long\def\forget#1\forgotten{{}}
\title{Stable finiteness does not imply linear soficity}
\author{Be'eri Greenfeld}
\subjclass[2020]{46M07, 46H99, 16E50}
\keywords{Linear sofic algebra, metric ultraproducts, stably finite rings}
\thanks{The author thanks L.~Small for related discussions on PI-algebras.}
\begin{document}
\begin{abstract}
We prove that there exist finitely generated, stably finite algebras which are non linear sofic. This was left open by Arzhantseva and Păunescu in 2017.
\end{abstract}

\maketitle

\section{Introduction}

One of the most tantalizing open problems in algebra is whether a non-sofic group exists. Recall that a group is \textit{sofic} if it can be approximated by almost homomorphisms to symmetric groups, equivalently, if it embeds into a metric ultraproduct of finite symmetric groups endowed with the normalized Hamming distance. %\cite{EleSzaMA}.
Other important variants of soficity include \textit{hyperlinearity}, in which the symmetric groups are replaced by unitary groups, endowed with the normalized Hilbert-Schmidt norm (this is closely related to Connes' Embedding Conjecture), and \textit{linear soficity}, in which one considers metric ultraproducts of general linear groups endowed with the normalized rank function. Every sofic group is both hyperlinear \cite{EleSzaMA} and linear sofic \cite{ArzPau}. See also \cite{Pestov}.

Arzhantseva and Păunescu \cite{ArzPau} studied linear soficity of groups and algebras, and proved that a group $G$ is linear sofic if and only if its group algebra $\mathbb{C}[G]$ is linear sofic. Let us recall the required definition from \cite{ArzPau}.

Let $\mathcal{U}$ be a non-principal ultrafilter on the natural numbers and $(n_k)_{k}$ a sequence of natural numbers tending to infinity. We define the asymptotic rank function: $$ \rho_\mathcal{U}\colon \prod_{k} M_{n_k}(F)\rightarrow~[0,1]\ \ \ \ \ \text{by}\ \ \rho_\mathcal{U}\left((A_k)_k\right):=\lim_{k \rightarrow \mathcal{U}} \frac{1}{n_k}\rank(A_k) $$
Then one can form the metric ultraproduct $\prod_{k\rightarrow \mathcal{U}} M_{n_k}(F) / \Ker(\rho_\mathcal{U})$.

\begin{defn}[{\cite{ArzPau}}]
A countably generated algebra $A$ over a field $F$ is linear sofic if there
exists an injective homomorphism $\Phi\colon A\rightarrow \prod_{k\rightarrow \mathcal{U}} M_{n_k}(F) / \Ker(\rho_\mathcal{U})$. % Moreover, if A is a unital algebra we require that this homomorphism is unital.
\end{defn}

While no examples of non linear sofic groups are known, it is not hard to find examples of non linear sofic algebras, based on the following observation. We say that a unital ring $A$ is \textit{directly finite}\footnote{Aka `Dedekind-finite' or `Von Neumann finite'.} if $xy=1$ implies $yx=1$ for every $x,y\in A$, and \textit{stably finite} if $M_n(A)$ is directly finite for every $n\in \mathbb{N}$. There exist examples of directly finite but non-stably finite rings \cite{She}.
It is straightforward to check that any metric ultraproduct $\prod_{k\rightarrow \mathcal{U}} M_{n_k}(F) / \Ker(\rho_\mathcal{U})$ is stably finite, hence every linear sofic algebra is. The algebra $F\left<x,y\right>/\left<xy-1\right>$ is non-directly finite, hence non linear sofic.
An open conjecture of Kaplansky asserts that the group algebra of an arbitrary group is directly finite (Kaplansky proved it for fields of characteristic zero); Elek and Szabó proved Kaplansky's conjecture for sofic groups \cite{EleSza} (a different proof is given in \cite{ArzPau}).

In \cite{ArzPau}, the authors mention that ``Such [stably finite non linear sofic] algebras seem difficult to find as counterexamples to soficity in general proved to be elusive." The aim of this note is to prove that stably finite, non linear sofic algebras exist.
Our proof is based on an asymptotic linear algebraic analysis of certain non-commutative equations, which we then show that can be solved in various stably finite algebras. The first instance is obtained using an example of Irving \cite{Irving}, from the theory of polynomial identity (PI) algebras:

\begin{thm} \label{main}
Over an arbitrary field, there exists a finitely generated non linear sofic algebra which satisfies a polynomial identity and is thus stably finite.
\end{thm}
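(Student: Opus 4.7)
The overall strategy I would pursue is to isolate a finite system of non-commutative polynomial equations with the property that any ``approximate'' solution inside a metric ultraproduct of matrix algebras forces a contradictory rank inequality, and then to exhibit such a system as realisable inside Irving's PI algebra from \cite{Irving}.

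First I would formulate the asymptotic linear-algebraic obstruction as a separate lemma: if an algebra $A$ contains elements $x_1,\dots,x_m$ satisfying a prescribed finite list of non-commutative equations, then $A$ cannot embed into $\prod_{k\to\mathcal{U}} M_{n_k}(F)/\Ker(\rho_\mathcal{U})$. The expected shape of the obstruction is a rank pigeonhole: the equations should impose, for any hypothetical matrix representatives $X_1^{(k)},\dots,X_m^{(k)}\in M_{n_k}(F)$ approximating the $x_i$ in rank metric, a collection of rank estimates on polynomial expressions in the $X_i^{(k)}$ that force the identity of $M_{n_k}(F)$ to be a sum of quantities whose individual ranks already exceed $n_k$ asymptotically. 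Candidate templates include Peirce-type decompositions $e_ie_j=0$, $\sum_i e_i = 1$ augmented by further identities forcing $\rank(e_i)\geq \alpha_i n_k - o(n_k)$ with $\sum_i \alpha_i > 1$, or equations expressing that some element simultaneously annihilates and is annihilated by a set whose spans must be ``too big''.

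Second I would verify that Irving's algebra realises such a system. Irving produced a finitely generated PI algebra whose ideal or idempotent structure is rich enough to exhibit phenomena not present in any single $M_n(F)$, despite satisfying a polynomial identity. The task is to match the equations designed in the first step to concrete elements supplied by Irving's construction. Once this is done, stable finiteness is automatic: every PI algebra is stably finite, hence the resulting algebra is simultaneously PI, stably finite, finitely generated, and non linear sofic.

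The principal difficulty will lie in the first step. The obstruction cannot be visible in any single $M_n(F)$, since all full matrix algebras over $F$ are themselves PI and trivially linear sofic, so it must be genuinely asymptotic, exploiting that in $M_{n_k}(F)$ the identity cannot be decomposed into too many mutually orthogonal idempotents each of non-vanishing density. Engineering a system simultaneously flexible enough to hold in Irving's PI algebra and rigid enough to survive passage to the ultraproduct rank function $\rho_\mathcal{U}$ is the core of the proof.
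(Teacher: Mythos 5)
Your overall architecture matches the paper's exactly: an asymptotic rank-obstruction lemma phrased in terms of a finite system of non-commutative equations, realised inside Irving's algebra from \cite{Irving}, with stable finiteness coming for free from the PI property (PI algebras are stably finite via Kaplansky's theorem on primitive PI rings plus Lemma \ref{lem_jac}). But the proposal has a genuine gap at precisely the point you yourself flag as the principal difficulty: you never exhibit the system of equations or the rank estimate, and of the two templates you float, the first (orthogonal idempotents $e_ie_j=0$, $\sum_i e_i=1$ with prescribed densities $\alpha_i$ summing to more than $1$) is not realisable here --- Irving's algebra $F\langle x,y\rangle/\langle x^2,\, yxy-x\rangle$ is a nilpotent extension of $F[y]$ and has no useful idempotent structure --- and you do not supply any mechanism by which purely algebraic identities would certify the lower bounds $\rank(e_i)\geq\alpha_i n_k-o(n_k)$; injectivity of the embedding only gives each non-zero element \emph{some} uncontrolled positive rank density.

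Your second template (``an element simultaneously annihilates and is annihilated by a set whose spans must be too big'') is the productive one, and the missing content is the following. The paper's obstruction (Lemma \ref{main_lem}) reads: if $A$ contains non-zero $x,y,z$ with $x\in yxA$, $z\in xA$ and $yz=0$, then $A$ is not linear sofic. The mechanism is a rank squeeze rather than a pigeonhole on idempotents: in an approximate representation $\varphi$ into $M_n(F)$ with $\rank(\varphi(x)),\rank(\varphi(z))>\alpha n$, the relation $z\in xA$ forces $\Image(\varphi(z))$ to lie in $\Image(\varphi(x))$ up to $o(n)$, while $yz=0$ forces $\varphi(y)$ to crush that large subspace; hence $\rank(\varphi(y)\varphi(x))<\rank(\varphi(x))-\frac{\alpha}{2}n$. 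On the other hand $x\in yxA$ gives $\rank(\varphi(x))\leq\rank(\varphi(y)\varphi(x))+o(n)$, a contradiction. In Irving's algebra one takes $z=xyx$: it is non-zero because $xyx$ is a normal form for the reduction system $\{x^2\mapsto 0,\ yxy\mapsto x\}$ (Bergman's Diamond Lemma, the unique overlap being resolvable), it visibly lies in $xA$, it satisfies $yz=yxyx=(yxy)x=x^2=0$, and $x=yxy=(yx)y\in yxA$. Without this explicit system and the accompanying two-sided rank estimate, the proposal is a correct research plan but not yet a proof.
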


Another example, of a completely different flavor, arises from the Cohn-Sasiada construction of a simple Jacobson radical ring \cite{CohSas}. % Given a non-unital $F$-algebra $J$, let $J^1$ denote its unital hull.

\begin{thm} \label{main_jac}
Over an arbitrary field, there exists a finitely generated non linear sofic algebra which is Jacobson radical and whose unital hull\footnote{The unital hull of an $F$-algebra $R$ is the vector space $R^1:=F\oplus R$ with multiplication: \\ $(\alpha+r)\cdot (\alpha'+r'):=\alpha\alpha'+\alpha r'+\alpha' r+rr'$.} is thus a non linear sofic stably finite algebra.
\end{thm}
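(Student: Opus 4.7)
The plan is to reuse the asymptotic rank obstruction developed in the proof of \Tref{main} --- a configuration of non-commutative relations that cannot be approximately realized in any metric ultraproduct of matrix algebras --- and to exhibit it inside a finitely generated Jacobson radical algebra. The Cohn-Sasiada construction produces a simple $F$-algebra $R$ which equals its own Jacobson radical: every element of $R$ is quasi-invertible, and every non-zero element generates $R$ as a two-sided ideal. This simplicity is the source of the flexibility needed to place the required relations inside $R$.

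The construction proceeds in two steps. First, I would identify elements $a_1,\ldots,a_k \in R$ witnessing the asymptotic obstruction; their existence should follow either from the simplicity of $R$ (which rules out only very restrictive systems of relations from being realized by non-zero elements), or --- more robustly --- by rerunning the Cohn-Sasiada procedure starting from the free $F$-algebra on $k$ generators modulo the obstructive relations and verifying that the resulting quotient is still non-zero and simple. Second, I would pass to a finitely generated Jacobson radical subalgebra $A$ containing these witnesses, for instance by starting with $\{a_1,\ldots,a_k\}$ and adjoining quasi-inverses of the relevant elements until a closed finite generating set is obtained; the resulting $A$ should then be finitely generated over $F$, Jacobson radical, and satisfy the obstruction, so that it is not linear sofic.

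The hardest step will be verifying that $A$ is genuinely Jacobson radical: subalgebras of Jacobson radical algebras need not be so, because the quasi-inverse of an element of $A$ lives a priori only in the ambient $R$. Once this obstacle is handled --- either by a careful choice of generators or by an adapted Cohn-Sasiada construction performed at finite rank from the outset --- the stable finiteness of the unital hull $A^1 = F \oplus A$ follows from the standard lifting argument: $\mathrm{Jac}(A^1) = A$, so $M_n(A^1)/M_n(A) \cong M_n(F)$ is directly finite, and if $xy = 1$ in $M_n(A^1)$ then $yx - 1 \in M_n(A)$ is quasi-invertible, forcing $yx$ to be a unit and hence equal to $1$. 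Since $A$ embeds in $A^1$ as an ideal and $A$ is not linear sofic, neither is $A^1$, which provides the desired example.
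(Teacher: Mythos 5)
Your high-level architecture agrees with the paper's: realize the rank obstruction of Lemma~\ref{main_lem} inside a Jacobson radical algebra built by Cohn--Sasiada-type methods, then pass to the unital hull via Lemma~\ref{lem_jac} (your lifting argument for stable finiteness, and the observation that non linear soficity descends from $A^1$ to the subalgebra $A$ and hence transfers upward, are both correct). But the proposal stops exactly where the proof has to begin. The entire content of the theorem is the exhibition of non-zero elements $x$, $y$, $z'$ in a Jacobson radical algebra with $x\in yxA$, $z'\in xA$ and $yz'=0$, and in particular the verification that imposing the relation $yz'=0$ does not kill $z'$. The paper does this in two stages: it sets $R=A^{+}/\ideal{yx^2y-x}$ inside the power series ring $F\left<\left<x,y\right>\right>$, where the Cohn--Sasiada ``Basic Lemma'' guarantees $x\neq 0$; it then adjoins an \emph{external} square-zero element $z$, forming $S=R\oplus R^1z$ with $(s_0+s_1z)(t_0+t_1z)=s_0t_0+s_0t_1z$, and quotients by $K=\ideal{yxz}$, proving $xz\notin K$ by collapsing any expression of $xz$ as an element of $K$ to $xz=fxz$ with $f\in R$ and then using the quasi-inverse of $f$ to force $xz=0$, a contradiction. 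That quasi-inverse computation is the crux of the argument, and it is precisely where radicality --- not simplicity --- does the work. Nothing in your proposal substitutes for it: ``their existence should follow either from the simplicity of $R$ \dots or by rerunning the Cohn--Sasiada procedure \dots and verifying that the resulting quotient is still non-zero'' names the problem rather than solving it.

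Moreover, the reliance on simplicity is misdirected and arguably counterproductive. If the ambient algebra were simple, the ideal generated by $yz'$ would be either zero or everything, so you could not impose $yz'=0$ by passing to a quotient without destroying the witnesses; the paper accordingly works with non-simple radical rings and borrows from \cite{CohSas} only the Basic Lemma (a non-membership statement for an ideal of the power series ring), not the simple radical ring itself. Finally, the step you correctly flag as hardest --- producing a \emph{finitely generated} Jacobson radical algebra --- is not resolved by ``adjoining quasi-inverses until a closed finite generating set is obtained'': each adjoined quasi-inverse produces new elements whose quasi-inverses must in turn be adjoined, and there is no reason for this process to terminate (compare $tF[t]\subseteq tF[[t]]$, where it does not). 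The paper avoids chasing quasi-inverses by only ever taking quotients of algebras already known to be radical. As written, your text is a plausible research plan, not a proof.
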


\smallskip
\textit{Conventions}. Throughout, $F$ is an arbitrary field; algebras are associative but not necessarily commutative; for a matrix $P\in M_n(F)$ we let $\Image(P)$ denote the image/column space of $P$, and let $\rank(P)=\dim_F \Image(P)$ denote its rank.

\section{non linear soficity}

In this section we prove the following non-soficity machinery:

\begin{lem} \label{main_lem}
Let $A$ be an $F$-algebra containing non-zero elements $x,y,z\in A$ such that:
\begin{itemize}
    \item $x\in yxA$
    \item $z\in xA$ and $yz=0$
\end{itemize}
Then $A$ is non linear sofic.
\end{lem}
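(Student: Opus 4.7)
The plan is a proof by contradiction: assume $A$ is linear sofic, fix an injective embedding $\Phi\colon A\hookrightarrow \prod_{k\rightarrow\mathcal{U}} M_{n_k}(F)/\Ker(\rho_\mathcal{U})$, and extract a contradiction by dimension counting on matrix representatives. Choose $a,b\in A$ with $x=yxa$ and $z=xb$, and lift $x,y,z,a,b$ to sequences $(X_k),(Y_k),(Z_k),(A_k),(B_k)$ in $M_{n_k}(F)$. The algebraic relations translate into the rank bounds $\rank(X_k - Y_kX_kA_k)$, $\rank(Z_k - X_kB_k)$ and $\rank(Y_kZ_k)$ being $o(n_k)$ along $\mathcal{U}$, while the non-vanishing of $x,y,z$ in $A$ together with the injectivity of $\Phi$ gives $\rank(X_k),\rank(Y_k),\rank(Z_k) \geq \alpha n_k$ for some $\alpha>0$ along $\mathcal{U}$.

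The heart of the argument is three approximate subspace estimates in $F^{n_k}$. First, the relation $Y_kX_kA_k\approx X_k$ forces $\rank(Y_kX_k) \geq \rank(X_k) - o(n_k)$; combined with the identity $\rank(Y_kX_k)=\rank(X_k)-\dim(\Image(X_k)\cap\Ker(Y_k))$, this gives $\dim(\Image(X_k)\cap\Ker(Y_k)) = o(n_k)$, i.e.\ $Y_k$ is nearly injective on $\Image(X_k)$. Second, from $X_kB_k \approx Z_k$ we obtain $\Image(Z_k) \subseteq \Image(X_k) + V_k$ for a subspace $V_k$ of dimension $o(n_k)$, so $\dim(\Image(Z_k)\cap\Image(X_k)) \geq \rank(Z_k) - o(n_k)$. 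Third, $Y_kZ_k \approx 0$ immediately yields $\dim(\Image(Z_k)\cap\Ker(Y_k)) \geq \rank(Z_k) - o(n_k)$.

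To conclude, I intersect the second and third subspaces inside $\Image(Z_k)$: both have codimension $o(n_k)$ in $\Image(Z_k)$, so their intersection has dimension $\geq \rank(Z_k) - o(n_k)$ and is contained in $\Image(X_k)\cap\Ker(Y_k)$, which by the first estimate has dimension $o(n_k)$. Hence $\rank(Z_k) = o(n_k)$ along $\mathcal{U}$, contradicting $\rho_\mathcal{U}(\Phi(z))\geq \alpha>0$. I do not foresee a serious obstacle; the hypotheses interact cleanly with the rank pseudometric, and the only care required is bookkeeping of $o(n_k)$ error terms under finitely many operations. The conceptual content is that ``$x\in yxA$'' forces near-injectivity of $Y_k$ on $\Image(X_k)$, which is incompatible with the approximate inclusion $\Image(Z_k)\subseteq \Image(X_k)\cap\Ker(Y_k)$ when $\Image(Z_k)$ has positive density.
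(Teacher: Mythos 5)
Your proposal is correct and takes essentially the same approach as the paper: a contradiction via the metric ultraproduct, lifting to a single linear map with $\rank(X),\rank(Z)>\alpha n$ and all three relation-defects of rank $o(n)$, followed by the same dimension count showing that $x\in yxA$ makes $Y$ nearly injective on $\Image(X)$ while $z\in xA$, $yz=0$ force a positive-density subspace of $\Image(X)$ into $\Ker(Y)$. The only difference is organizational: you intersect $\Image(Z)\cap\Image(X)$ with $\Image(Z)\cap\Ker(Y)$ and contradict $\rank(Z)\geq\alpha n$, whereas the paper bounds $\rank(\varphi(y)\varphi(x))$ from above via a complement of $\Image(\varphi(z))\cap\Image(\varphi(x))$ and contradicts $x=yxa$; these are interchangeable formulations of the same estimate.
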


\begin{proof}
On the contrary, if $A$ is linear sofic then we have an embedding: $$ \Phi\colon A \rightarrow \prod_{k\rightarrow \mathcal{U}} M_{n_k}(F) / \Ker(\rho_\mathcal{U}). $$ Fix a linear lift of $\Phi$ to $\prod_k M_{n_k}(F)$, say, $\widehat{\varphi}\colon A \rightarrow \prod_k M_{n_k}(F)$ so: $$ \Phi(a)=0 \iff \widehat{\varphi}\in \Ker(\rho_\mathcal{U}). $$ Write $\widehat{\varphi}=\prod_k \varphi_k$ with each $\varphi_k\colon A\rightarrow M_{n_k}(F)$. For every $0\neq a\in A$ there exists $\varepsilon>0$ such that: $$ \{k:\ \rank(\varphi_k(a))>\varepsilon n_k\} \in \mathcal{U}, $$ and for every $a,b\in A$ and $\varepsilon>0$, we have: $$ \{k:\ \rank\left(\varphi_k(a)\varphi_k(b)-\varphi_k(ab)\right)<\varepsilon n_k\}\in \mathcal{U}. $$
% linear maps $\varphi_k\colon A\rightarrow M_{n_k}(F)$ such that for each $0\neq a\in A$, the sequence $\frac{1}{n_k}\rank(\varphi_k(a))$ is bounded away from zero, and for each $a,b\in A$ we have $\rank\left(\varphi_k(ab)-\varphi_k(a)\varphi_k(b)\right)=o(n_k)$.
In particular, since $\mathcal{U}$ is a non-principal ultrafilter, we can fix a positive real $\alpha>0$ and a linear map $\varphi\colon A\rightarrow M_n(F)$ such that:
$$ \rank(\varphi(x)),\ \rank(\varphi(z))  > \alpha n. $$
By the assumptions of the lemma, $x=yxa$ and $z=xb$ for some $a,b\in A$, and $yz=0$. Let $T:=\varphi(x)-\varphi(y)\varphi(x)\varphi(a)$ and $S:=\varphi(z)-\varphi(x)\varphi(b)$. We may additionally assume that:
$$ \rank(\varphi(y)\varphi(z)),\ \rank(T),\ \rank(S) < \frac{\alpha}{4}n. $$

% $$ \rank\left(\varphi(y)\varphi(x)\varphi(y)-\varphi(x)\right),\rank\left(\varphi(x)\varphi(y)\varphi(x)-\varphi(xyx)\right),\rank(\varphi(x)^2),\rank\left(\varphi(y)\varphi(x)\varphi(y)\varphi(x)\right) < \frac{\alpha}{100}n $$

% We may assume that $F$ is algebraically closed by extending scalars to $F^{\text{alg}}$. Thus, by conjugating $\varphi$, we may assume that $\varphi(x)$ is given in the following Jordan form:
\textit{Claim}. We have: $$\rank(\varphi(y)\varphi(x)) <  \rank(\varphi(x))-\frac{\alpha}{2} n.$$
\textit{Proof of Claim}. 
First, since $\varphi(z)=\varphi(x)\varphi(b)+S$: $$ \Image(\varphi(z))\subseteq \Image(\varphi(x)) + \Image(S). $$
It follows that:
\begin{eqnarray*}
\frac{\Image(\varphi(z))}{\Image(\varphi(z)) \cap \Image(\varphi(x))} & \cong & \frac{\Image(\varphi(z))+\Image(\varphi(x))}{\Image(\varphi(x))} \\
& \subseteq & \frac{\Image(\varphi(x))+\Image(S)}{\Image(\varphi(x))} \\
& \cong & \frac{\Image(S)}{\Image(\varphi(x)) \cap \Image(S)}
\end{eqnarray*}
So: $$ \dim_F \left( \Image(\varphi(z))\cap \Image(\varphi(x))\right) \geq \rank(\varphi(z))-\rank(S)>\frac{3}{4}\alpha n. $$
Denote $\mathcal{V} := \Image(\varphi(y)\varphi(z))$ and recall that $\dim_F \mathcal{V} < \frac{\alpha}{4}n$.
Fix a direct sum complement of $\Image(\varphi(z))\cap \Image(\varphi(x))$ inside $\Image(\varphi(x))$, say, $\mathcal{W}$, and notice that: \begin{eqnarray*} \dim_F \mathcal{W} & = & \rank(\varphi(x)) - \dim_F \left( \Image(\varphi(z))\cap \Image(\varphi(x)) \right) \\ & < & \rank(\varphi(x)) - \frac{3\alpha}{4} n. \end{eqnarray*}
Now:
\begin{eqnarray*}
\rank(\varphi(y)\varphi(x)) & = & \dim_F \Image(\varphi(y)\varphi(x)) \\
& = & \dim_F \varphi(y)\Image(\varphi(x)) \\ & \leq & \dim_F \varphi(y)\left(\Image(\varphi(z)) \cap \Image(\varphi(x)) \right) + \dim_F \varphi(y)\mathcal{W} \\
& \leq & \dim_F \mathcal{V} + \dim_F \mathcal{W} \\
& < & \frac{\alpha}{4}n + \left(\rank(\varphi(x)) - \frac{3\alpha}{4} n \right) \\ & = & \rank(\varphi(x))-\frac{\alpha}{2} n.
\end{eqnarray*}

\smallskip
Return to the proof of the lemma.
% Observe that on one hand: $$(*)\ \ \  \{\varphi(x)e_2,\varphi(x)e_4,\dots,\varphi(x)e_{2m}\} =  \{e_1,e_3,\dots,e_{2m-1}\} $$ spans an $m$-dimensional subspace of $F^n$. On the other hand, 
Since $\varphi(x)=\varphi(y)\varphi(x)\varphi(a)+~T$ for some $T\in M_n(F)$ with $\rank(T)<\frac{\alpha}{4} n$, we have:
\begin{eqnarray*}
\Image \varphi(x) & = & \Image(\varphi(y)\varphi(x)\varphi(a)+T) \\
& \subseteq & \Image \left( \varphi(y) \varphi(x)\right) + \Image(T)
\end{eqnarray*}
whose dimension is at most $ \rank\left(\varphi(y)\varphi(x)\right) + \rank(T) $ which is, by the above claim, at most: $$ \rank(\varphi(x))-\frac{\alpha}{2} n + \rank(T) < \rank(\varphi(x))-\frac{\alpha}{4} n, $$
a contradiction. Hence $A$ is non linear sofic.
\end{proof}

\section{Stably finite non linear sofic algebras}

The following observation is well-known, and is brought here for the reader's convenience:

\begin{lem} \label{lem_jac}
Let $R$ be a ring and $J\triangleleft R$ its Jacobson radical. If $R/J$ is stably finite then so is $R$.
\end{lem}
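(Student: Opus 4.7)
The plan is to reduce the stable finiteness statement to the single assertion that direct finiteness lifts along the Jacobson quotient, and then prove this lift by a short idempotent computation.

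\textbf{Reduction.} It is standard that $J(M_n(R)) = M_n(J(R))$, so $M_n(R)/M_n(J) \cong M_n(R/J)$. Since $R/J$ is stably finite by assumption, $M_n(R/J)$ is directly finite for every $n$. Hence it suffices to establish the following one-variable statement, which we then apply to each ring $S = M_n(R)$ in turn: if $S/J(S)$ is directly finite, then so is $S$.

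\textbf{Lifting direct finiteness.} Suppose $xy = 1$ in $S$, and put $e := 1 - yx$. Using $xy = 1$, a direct computation gives
$$ e^2 = (1-yx)(1-yx) = 1 - 2yx + y(xy)x = 1 - 2yx + yx = 1 - yx = e, $$
so $e$ is idempotent. Reducing modulo $J(S)$, the direct finiteness of $S/J(S)$ forces $\overline{y}\,\overline{x} = 1$, hence $e \in J(S)$. Now recall the classical fact that the Jacobson radical contains no nonzero idempotent: if $e \in J(S)$ is idempotent, then $1 - e$ is a unit (since $1 + J(S) \subseteq S^{\times}$), and $e(1-e) = 0$ then forces $e = 0$. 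Therefore $yx = 1$, and $S$ is directly finite.

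The proof is a patchwork of standard facts --- the matrix/radical identification $J(M_n(R)) = M_n(J(R))$, the inclusion $1 + J(S) \subseteq S^{\times}$, and the elementary idempotent computation above --- so I anticipate no serious obstacle beyond assembling the right textbook lemmas, which is presumably why the author labels the result ``well-known.''
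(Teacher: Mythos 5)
Your proof is correct and follows essentially the same route as the paper: reduce modulo the Jacobson radical of $M_n(R)$, use direct finiteness of the quotient to place $1-yx$ in the radical, and conclude via quasi-invertibility of radical elements. The only cosmetic difference is the endgame --- you kill $1-yx$ as an idempotent lying in $J(S)$, while the paper instead notes that $yx = 1-(1-yx)$ is a unit and deduces $yx=1$ from $xy=1$; both rest on the same fact that $1+J \subseteq S^{\times}$.
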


\begin{proof}
Suppose that $X,Y\in M_n(R)$ satisfy $XY=I$. Since $R/J$ is stably finite then $I-YX\in M_n(J)$. Since $J$ is a quasi-invertible ideal, so is $M_n(J)\triangleleft M_n(R)$ and hence $YX=I-(I-YX)$ is invertible, so both $X,Y$ are invertible and since $XY=I$ it follows that $X=Y^{-1}$ and $YX=I$.
\end{proof}

\begin{proof}[{Proof of Theorem \ref{main}}]
Let $A=F\left< x,y \right> / \left<x^2, yxy-x \right>$. This algebra was introduced by Irving \cite{Irving} as an example of a finitely presented PI algebra which is not embeddable into any matrix algebra over a field.

The set of monomials in $x,y$ which avoid occurrences of $x^2$ and $yxy$ forms a linear basis for $A$; this fact was established in \cite{Irving}.
Indeed, this is a direct consequence of Bergman's Diamond Lemma \cite{Bergman}, since the only overlap between the reductions $yxy\mapsto x,\ x^2\mapsto 0$ is: $$ 0=xxy=(yxy)xy=yx(yxy)=yxx=0. $$
By \cite[Theorem~2]{Irving}, $A$ satisfies a polynomial identity (and has linear growth). Explicitly, since $\left<x\right>\triangleleft A$ satisfies $A/\left<x\right>\cong F[y]$ and $\left<x\right>^3=0$, the identity: $$ [X_1,Y_1][X_2,Y_2][X_3,Y_3]=0 $$ holds in $A$.

Any PI-ring is stably finite; this is well-known. Indeed, suppose that $R$ is a PI-ring and $J\triangleleft R$ is its Jacobson radical. Each primitive homomorphic image of $R$ is a simple algebra which is finite-dimensional over its center \cite{Kap}, hence stably finite by linear algebra. Thus, $R/J$ is a subdirect product of stably finite rings, so it is stably finite itself. By Lemma \ref{lem_jac}, $R$ itself is stably finite. In particular, our specific ring $A$ is stably finite. % if $u,v\in R$ satisfy $uv=1$ then $vu-1$ belongs to every primitive ideal of $R$, hence to $J$, so $R/J$ Since $J$ is quasi-invertible, $vu$ is invertible, so both $u,v$ are invertible; combined with the assumption that $uv=1$, we actually obtain that $v=u^{-1}$ and $vu=1$. As matrix rings over PI-rings are again PI, it follows that any PI-ring is stably finite.

Finally, $A$ fulfills the requirements of Lemma \ref{main_lem} with $z=xyx$. Indeed, $xyx$ contains no occurrences of $x^2$ or $yxy$, and is thus non-zero; obviously, $xyx\in xA$; and finally, $$ yz=yxyx=x^2=0. $$ Hence $A$ is non linear sofic.
\end{proof}

\begin{proof}[{Proof of Theorem \ref{main_jac}}]
Let $A=F\left<\left<x,y\right>\right>$ be the ring of noncommutative formal power series and let $A^{+}$ be the ideal of $A$ consisting of all power series with zero constant term. Let $I=\left<yx^2y-x\right>\triangleleft A^{+}$. Since $A^{+}$ is Jacobson radical, the quotient ring $R:=A^{+}/I$ is also Jacobson radical. By \cite[\S 2, ``Basic Lemma'']{CohSas}, the image of $x$ in $R$ (for simplicity, we identify elements in $A^{+}$ with their images modulo $I$, by abuse of notation) is non-zero. Define a ring $S$ as follows. As an $F$-vector space, $$ S=R\oplus R^{1}z $$ (where $R^{1}$ is the unital hull of $R$) and multiplication is given by: $$ (s_0+s_1z)\cdot (t_0+t_1z) = s_0t_0+s_0t_1z $$
It is easy to see that $\left<z\right>^2=0$ and $S/\left<z\right>\cong R$, so $S$ is Jacobson radical. Consider $K=\left<yxz\right>\triangleleft S$. We claim that $xz\notin K$. Otherwise, write:
$$ (*)\ \ \ xz=\sum_{i=1}^{m} u_i(yxz)v_i $$
for some $u_i,v_i\in S^{1}$. Write $u_i=u_i^0+u_i^1z,v_i=v_i^0+v_i^1z$ where $u_i^0,v_i^0,u_i^1,v_i^1\in R^1$. Since $zs$ is a scalar multiplication of $z$ for an arbitrary $s\in S^1$ and $\left<z\right>^2=0$ we can rewrite $(*)$ as:
$$ xz=\sum_{i=1}^{m} \alpha_iu_i^0yxz $$
for some $\alpha_i\in F$, or equivalently:
$$ xz=fxz\ \ \ \text{where}\ \ \ f=\sum_{i=1}^{m} \alpha_iu_i^0y \in R $$
let $g$ be the quasi-inverse of $f$, namely, $gf=f+g$. Then $gxz=gfxz=fxz+gxz$, so $xz=fxz=0$. This contradicts that $x\neq 0$.

It follows that $xz\notin K$, so in the quotient ring $T:=S/K$ we have (again identifying elements in $S$ with their images modulo $K$): $$ xz\neq 0,\  yxz=0,\ x=yx^2y \in yxT $$
By Lemma \ref{main_lem} (in which the role of `$z$' is now taken by $xz$), the algebra $T$ is non linear sofic; since $S$ is Jacobson radical, so is $T=S/K$. The unital hull $T^{1}$ is non linear sofic and stably finite by Lemma \ref{lem_jac}.
\end{proof}

We conclude with:

\begin{ques}
Suppose that $J$ is a Jacobson radical algebra. If $J^{\circ}$ is a linear sofic group, must $J$ be a linear sofic algebra?
\end{ques}

If the answer to this question is affirmative then Theorem \ref{main_jac} gives an example for a non linear sofic (hence, non-sofic) group.

\end{document}